\newtheorem{theorem}{Theorem}[section]
\newtheorem{lemma}[theorem]{Lemma}
\theoremstyle{definition}
\theoremstyle{remark}
\numberwithin{equation}{section} \theoremstyle{plain}
\def\F{\mathbb F}
\def\B{\mathbb B}
\def\G{{\mathbb G}L}
\def\U{\mathbb U}
\def\V{\mathbb V}
\def\W{\mathbb W}
\def\F{\mathbb F}
\def\B{\mathfrak B}
\def\U{\mathbb U}
\def\V{\mathbb V}
\def\W{\mathbb W}
\def\S{\mathbb S}
\newcommand{\thmref}[1]{Theorem~\ref{#1}}
\newcommand{\lemref}[1]{Lemma~\ref{#1}}
\newcommand{\eqnref}[1]{~{\textrm(\ref{#1})}}
\begin{document}

\title[orthogonal and symplectic groups over algebraically closed fields]{On the Conjugacy Classes in the orthogonal and symplectic groups over algebraically closed fields}
\author{Krishnendu Gongopadhyay}
\address{Theoretical Statistics and Mathematics Unit,
Indian Statistical Institute, 203 B. T. Road,
Kolkata 700108, India}
\email{krishnendug@gmail.com}
\date{November 2, 2009}
\copyrightinfo{2001}{enter name of copyright holder}

\keywords{orthogonal group, symplectic group, conjugacy classes}
\footnote{{\it Mathematics Subject Classification(2000). \hspace{.1in}}{Primary 20G15, 20E45; Secondary 17B10 }}

\begin{abstract}
Let $\F$ be an algebraically closed field. Let $\V$ be a vector space equipped with a non-degenerate symmetric or symplectic bilinear form $B$ over $\F$. Suppose the characteristic of $\F$ is \emph{sufficiently large}, i.e. either zero or greater than the dimension of $\V$. Let $I(\V, B)$ denote the group of isometries. Using the Jacobson-Morozov lemma  we give a new and simple proof of the fact that two elements in $I(\V,B)$ are conjugate if and only if they have the same elementary divisors.
\end{abstract}
\maketitle

\section{Introduction}
Let $\F$ be an algebraically closed field. Let $\V$ be a vector space of dimension $n+1$ over $\F$. Suppose the characteristic of $\F$ is \emph{sufficiently large}, i.e. char$(\F)$ is either zero or greater than the dimension of $\V$. Let $B$ be a non-degenerate symmetric, resp. symplectic (i.e. skew-symmetric),  bilinear form on $\V$. Such a $(\V, B)$ is called a \emph{non-degenerate space}.  Let $I(\V,B)$ denote the group of isometries of $(\V,B)$. It is a linear algebraic group. When $B$ is symmetric, resp. symplectic, $I(\V, B)$ is called the orthogonal, resp. symplectic group of $(\V, B)$.  An element of $I(\V, B)$ will be  called an \emph{isometry}. Let $\W$ be a subspace of $\V$. The restriction of $B$ on $\W$, viz. the form $B: \W \times \W \to \F$, will be denoted by $B|_{\W}$.

By a remarkable property of a linear algebraic group, every isometry of $(\V, B)$ has the unique Jordan decomposition cf. Humphreys \cite{hum}. That is, every isometry $T: \V \to \V$ has the unique decomposition $T=T_s T_u$, where $T_s: \V \to \V$ is semisimple (i.e. every $T_s$-invariant subspace has a $T_s$-invariant complement), $T_u: \V \to \V$ is unipotent (i.e. all eigenvalues are $1$). Moreover,  $T_s$, $T_u$ are elements of $I(\V, B)$, they are polynomials in $T$, and $T_sT_u=T_uT_s$.

When $B$ is symmetric assume $n \geq 2$, and when $B$ is symplectic assume $n \geq 1$.   In these cases $I(\V,B)$ has unipotent isometries. Moreover, the group $I(\V, B)$ is a semisimple algebraic group. Let $T:\V \to \V$ be a unipotent isometry. Then $T-I$ is nilpotent, i.e. there exists an integer $m$ such that $(T-I)^m=0$. The transformation $T-I$ is contained in the Lie algebra $\mathfrak I (\V, B)$ of $I(\V, B)$. Since the characteristic of $\F$ is large, the  Jacobson-Morozov lemma is valid for unipotent isometries. Let $\S L(2, \F)$ denote the group of all invertible $2 \times 2$ matrices over $\F$ with determinant $1$. Let $\mathfrak{sl}(2, \F)$ denote the algebra of all $2 \times 2$ matrices over $\F$ with trace zero.   The Jacobson-Morozov lemma implies that there exists a subalgebra of $\mathfrak I(\V, B)$ which contains $T-I$ and is isomorphic to $\mathfrak{sl}(2, \F)$. The corresponding algebraic group of which $\mathfrak{sl}(2, \F)$ is a Lie algebra, is $\S L(2, \F)$ or $P \S L(2, \F)=\S L(2, \F)/\{\pm I\}$, and it contains $T$. So, $T$ can be embedded in a subgroup $\pi$ of $I(\V, B)$ where $\pi$ is locally isomorphic to $\S L(2, \F)$.

Let $S: \V \to \V$ be an invertible linear transformation. An $S$-invariant subspace is said to be \emph{ indecomposable} with respect to $S$, or simply $S$-\emph{indecomposable}  if it can not be expressed as a direct sum of two proper $S$-invariant subspaces. The elementary divisors give the \emph{primary decomposition} of $\V$ into a direct sum of $S$-indecomposable subspaces and the decomposition is unique up to ``dynamical equivalence"(cf. Kulkarni \cite{kulkarni}). Each $S$-indecomposable summand in the decomposition is isomorphic to a cyclic algebra $\F[x]/((p(x)^k)$, where $p(x)$ is a prime factor of the minimal polynomial of $T$. The prime power $p(x)^k$ is an \emph{elementary divisor} of $T$. Let $\G(\V)$ denote the group of all invertible linear transformations from $\V$ onto $\V$. Suppose two elements $S$ and $T$ have the same set of elementary divisors. Then the primary decompositions of $\V$ with respect to $S$ and $T$ are isomorphic, i.e. to each summand $\V_i^S$ in the $S$-primary decomposition, there is a summand $\V_j^T$ in the $T$-primary decomposition such that $\V_i^S$ and $\V_j^T$ are isomorphic. Let $f:\V \to V$ be a linear isomorphism which maps each $\V_i^S$ onto the corresponding summand $\V_j^T$. The isomorphism $f$ conjugates $S$ and $T$. Conversely, if $S$ and $T$ are conjugates, then they have the same set of elementary divisors. Hence the elementary divisors are conjugacy invariants for $\G(\V)$, cf. Roman \cite[Theorem 7.10, p-149]{roman}, Jacobson \cite[Exercise 2, p-98]{j} for more details, and for a modern viewpoint cf. Kulkarni \cite[p-5]{kulkarni}. It turns out that the elementary divisors are also complete invariants for the conjugacy classes in $I(\V, B)$.
\begin{theorem}
Two isometries are conjugate in $I(\V, B)$ if and only if they are conjugate in $\G(\V)$.
\end{theorem}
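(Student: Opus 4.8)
The ``only if'' direction is immediate: since $I(\V, B)$ is a subgroup of $\G(\V)$, any conjugation carried out inside $I(\V, B)$ is in particular a conjugation in $\G(\V)$. Thus the whole content is the converse, which by the discussion preceding the statement amounts to showing that two isometries with the same elementary divisors are conjugate by a single element of $I(\V, B)$. The plan is to reduce this to the building blocks of the $\G(\V)$-classification, namely the generalized eigenspaces, while tracking the form $B$, and then to dispatch the unipotent blocks using the $\mathfrak{sl}(2, \F)$-structure furnished by the Jacobson-Morozov lemma.

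First I would use the uniqueness of the Jordan decomposition. Writing $S = S_s S_u$ and $T = T_s T_u$, the semisimple and unipotent parts are polynomials in $S$ and $T$, so $\G(\V)$-conjugacy of $S$ and $T$ forces compatible $\G(\V)$-conjugacy of $S_s$ with $T_s$ and of $S_u$ with $T_u$. Next I would decompose $\V$ along the semisimple part: for an eigenvalue $\lambda$ of $S_s$ let $\V_\lambda$ be the corresponding generalized eigenspace. Since $S$ is an isometry, $B$ pairs $\V_\lambda$ with $\V_{\lambda^{-1}}$ and annihilates $\V_\lambda \times \V_\mu$ whenever $\lambda\mu \neq 1$; hence $\V_\lambda$ is itself nondegenerate exactly when $\lambda = \lambda^{-1}$, i.e. $\lambda = \pm 1$, and otherwise $\V_\lambda \oplus \V_{\lambda^{-1}}$ is a nondegenerate $S$-invariant subspace on which $\V_\lambda$ and $\V_{\lambda^{-1}}$ are dual totally isotropic summands. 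This splits $\V$ orthogonally into pieces indexed by the pairs $\{\lambda, \lambda^{-1}\}$, and by Witt's theorem (valid here, as the characteristic is large) it suffices to produce an isometry on each piece separately.

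On a mixed piece $\V_\lambda \oplus \V_{\lambda^{-1}}$ with $\lambda \neq \pm 1$, an isometry preserving the decomposition has the form $g \oplus (g^*)^{-1}$ with $g \in \G(\V_\lambda)$ and $g^*$ the adjoint with respect to the pairing, so it is completely determined by its restriction $g$ to the totally isotropic summand $\V_\lambda$. Because $S$ and $T$ share the elementary divisors attached to $\lambda$, their restrictions to $\V_\lambda$ are $\G(\V_\lambda)$-conjugate by some $g$; since $S$ being an isometry forces $S|_{\V_{\lambda^{-1}}} = (S|_{\V_\lambda}^*)^{-1}$ and likewise for $T$, the companion $(g^*)^{-1}$ automatically intertwines the $\lambda^{-1}$-parts, and $g \oplus (g^*)^{-1}$ is the required isometry. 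This settles every eigenvalue other than $\pm 1$ and reduces the theorem to a $\pm 1$-unipotent isometry on a nondegenerate space, which after factoring out the scalar $\pm 1$ (itself an isometry) is the case of a genuinely \emph{unipotent} isometry.

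The crux is therefore the unipotent case, where I would invoke the Jacobson-Morozov lemma exactly as set up in the introduction: the nilpotent $T - I \in \mathfrak I(\V, B)$ is completed to an $\mathfrak{sl}(2, \F)$-triple, so $\V$ becomes a representation of $\mathfrak{sl}(2, \F)$ on which $B$ is invariant. As the characteristic is large, $\V$ is a direct sum of irreducible $\mathfrak{sl}(2, \F)$-modules, one for each Jordan block of $T - I$, whose dimensions are read off directly from the elementary divisors of $T$. The main technical point is that each irreducible module of dimension $d$ carries, up to scalar, a unique nondegenerate invariant bilinear form, and that this form is symmetric when $d$ is odd and skew-symmetric when $d$ is even; an irreducible of the wrong parity for $B$ must then occur in dual pairs forming hyperbolic subspaces. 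Consequently the isometry type of $(\V, B)$ together with the $\mathfrak{sl}(2, \F)$-action is pinned down entirely by the multiplicities of the Jordan block sizes, so equal elementary divisors yield an isomorphism of these structures, which is precisely an isometry conjugating $S$ and $T$. I expect this parity bookkeeping for the invariant forms on the $\mathfrak{sl}(2, \F)$-constituents, combined with Witt cancellation to match the hyperbolic and nondegenerate parts, to be the heart of the argument, the remaining reductions being formal consequences of the uniqueness of the Jordan decomposition.
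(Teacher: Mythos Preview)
Your proposal is correct and follows essentially the same route as the paper: reduce via the primary decomposition \eqnref{decom} to the pieces $\V_\lambda+\V_{\lambda^{-1}}$ for $\lambda\neq\pm1$ (handled by duality, exactly your $g\oplus(g^*)^{-1}$), reduce $\V_{-1}$ to the unipotent case by passing to $-T$, and on the unipotent part invoke Jacobson--Morozov so that the $T$-indecomposable summands become irreducible $\mathfrak{sl}(2,\F)$-modules carrying a unique invariant form up to scalar, with the parity dichotomy of \lemref{uind} forcing wrong-parity blocks into isotropic pairs. The only cosmetic differences are that the paper phrases everything at the group level ($\S L(2,\F)$ rather than $\mathfrak{sl}(2,\F)$-triples), makes the final step explicit by writing the conjugating isometry as $\sqrt{c}\,f$ once $B_1=cB'$, and does not invoke Witt's theorem---the orthogonal pieces are matched directly since over an algebraically closed field their isometry type is determined by dimension and type.
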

The following is an equivalent version of this theorem.
\begin{theorem}\label{ccac}
Two isometries are conjugate in $I(\V, B)$ if and only if they have the same elementary divisors.
\end{theorem}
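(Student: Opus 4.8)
The plan is to take the forward implication for granted and concentrate on the converse. If $S$ and $T$ are conjugate in $I(\V,B)$ then, since $I(\V,B)\subseteq\G(\V)$, they are conjugate in $\G(\V)$ and hence carry the same elementary divisors by the general linear fact recalled above; so the entire content is to show that equality of elementary divisors forces conjugacy \emph{inside} $I(\V,B)$. Accordingly, I would assume $S,T\in I(\V,B)$ have the same elementary divisors and write the Jordan decompositions $T=T_sT_u$ and $S=S_sS_u$, whose factors are again isometries.

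First I would reduce to a single primary block. Decomposing $\V$ into the generalized eigenspaces $\V_\lambda$ of the semisimple isometry $T_s$ and using that $B(T_sv,T_sw)=B(v,w)$, one checks that $B(\V_\lambda,\V_\mu)=0$ unless $\lambda\mu=1$; this yields an orthogonal decomposition $\V=\V_1\perp\V_{-1}\perp\big(\bigoplus_{\lambda\ne\pm1}(\V_\lambda\oplus\V_{1/\lambda})\big)$ into $T$-invariant nondegenerate summands, and likewise for $S$, with corresponding summands agreeing in dimension because the elementary divisors match. It therefore suffices to conjugate each summand separately. The paired summands $\V_\lambda\oplus\V_{1/\lambda}$ with $\lambda\ne\pm1$ are the easy case: $\V_\lambda$ and $\V_{1/\lambda}$ are totally isotropic and dually paired by $B$, the isometry is completely determined by $A=T|_{\V_\lambda}$ together with $T|_{\V_{1/\lambda}}=(A^{*})^{-1}$, and any linear conjugacy between the $\V_\lambda$-restrictions of $S$ and $T$ (which exists since the elementary divisors match) extends to an isometry via the inverse adjoint on the dual factor. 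For the self-paired blocks $\V_{\pm1}$, multiplication by the central isometry $-I$ turns the eigenvalue $-1$ into $1$, reducing everything to the conjugacy of \emph{unipotent} isometries.

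This is where the Jacobson--Morozov lemma does the real work. For a unipotent isometry $T$ with nilpotent $N=T-I\in\mathfrak{I}(\V,B)$, I would extend $N$ to an $\mathfrak{sl}(2)$-triple inside $\mathfrak{I}(\V,B)$, making $\V$ an $\mathfrak{sl}(2,\F)$-module on which $B$ is invariant; the characteristic hypothesis (zero or greater than $\dim\V$) guarantees that the relevant representation theory behaves as in characteristic zero, giving complete reducibility and irreducibles $V(d)$ of each dimension $d\le\dim\V$. Writing $\V=\bigoplus_d V(d)\otimes M_d$, an invariant form can pair only isomorphic self-dual irreducibles, so the decomposition is $B$-orthogonal and $B$ restricts on $V(d)\otimes M_d$ to $\beta_d\otimes\gamma_d$, where $\beta_d$ is the essentially unique invariant form on $V(d)$, symmetric when $d$ is odd and alternating when $d$ is even. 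The prescribed symmetry of $B$ then pins down the symmetry type of the residual form $\gamma_d$ on the multiplicity space $M_d$; and over the algebraically closed field $\F$ a nondegenerate symmetric or alternating form is determined up to equivalence by $\dim M_d$ alone, a number fixed by the elementary divisors. Thus the triple $(\V,B,N)$, and hence $T$, is determined up to isometry by the elementary divisors, which produces the conjugating isometry $h$ intertwining the nilpotents and therefore satisfying $hSh^{-1}=T$.

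The main obstacle is this last step: controlling the interaction of the form $B$ with the $\mathfrak{sl}(2)$-action, namely establishing the factorization $B=\beta_d\otimes\gamma_d$ with the stated parity of $\beta_d$ and then recognizing that the residual forms $\gamma_d$ are classified over $\F$ by their dimension. A secondary point that must be handled with care is checking that the characteristic assumption genuinely licenses the classical $\mathfrak{sl}(2)$ module theory up to dimension $\dim\V$; this is precisely where the hypothesis $\mathrm{char}\,\F>\dim\V$ is needed.
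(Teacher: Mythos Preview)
Your argument is correct and follows the same overall architecture as the paper: reduce to primary components via the orthogonal eigenspace decomposition, dispose of the $\lambda\neq\pm1$ blocks by the ``dual pair'' trick, reduce $\lambda=-1$ to the unipotent case via $T\mapsto-T$, and then invoke Jacobson--Morozov together with the $\mathrm{SL}_2$ representation theory for the unipotent case. The one place where your execution differs from the paper's is the unipotent step. The paper first proves (\lemref{uind}) that on a $T$-indecomposable subspace the restricted form is either zero or nondegenerate, with the parity of the dimension deciding which; this yields an orthogonal decomposition of $\V_1$ into nondegenerate indecomposable pieces and standard pairs, and on a single indecomposable piece the paper takes an arbitrary $\G(\V)$-conjugacy $f$, observes that the transported form must be $cB'$ by uniqueness of the invariant form on an irreducible, and rescales by $\sqrt{c}$ to obtain an isometry. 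You instead package all copies of a given irreducible at once via the isotypic decomposition $\V\cong\bigoplus_d V(d)\otimes M_d$ and factor $B=\bigoplus_d\beta_d\otimes\gamma_d$, then appeal to the fact that over an algebraically closed field a nondegenerate symmetric or alternating form is determined by its dimension; this is a cleaner way to handle multiplicities and avoids the block-by-block $\sqrt{c}$ adjustment, but the underlying representation-theoretic input (Schur's lemma plus the known parity of $\beta_d$) is identical to what the paper uses.
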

This is a very well-known result. There have been several proofs of this theorem, for eg. cf. \cite{kap}.
The theorem also follows from more general results like the conjugacy classification in the orthogonal and the symplectic groups over an arbitrary field of characteristic different from two cf. Milnor \cite{milnor}, Springer-Steinberg \cite{ss}, Wall \cite{wall}, Williamson \cite{will}, the conjugacy theorems in algebraic groups over an algebraically closed field cf. Seitz  \cite{seitz}, or from the categorical description of $\lambda$-hermitian forms cf. Scharlau \cite{sch} p-278.

Though the Jacobson-Morozov lemma is very useful in representation theory, none of the existing proofs of \thmref{ccac} explicitly used the Jacobson-Morozov lemma. In this note we prove \thmref{ccac} using the Jacobson-Morozov lemma. This yields a very simple proof.
A non-degenerate subspace of $(\V, B)$ is said to be  \emph{orthogonally indecomposable} with respect to an isometry $T$ if it is not an orthogonal sum of proper $T$-invariant subspaces.
Another major advantage of the use of the Jacobson-Morozov lemma is that it also classifies the orthogonally indecomposable subspaces with respect to a unipotent isometry cf. \lemref{uind} below.

\section{Preliminary results}\label{prel}
\subsection{Self-duality of the characteristic polynomial}
Let $T$ be in $I(\V, B)$. Let $\V_{ \lambda}$ denote the generalized eigenspace of $T$ with eigenvalue $\lambda$, i.e.
$$\V_{\lambda}=\{v \in \V| (T-\lambda I)^{n+1}v=0\}.$$
Then it is the (usual) eigenspace of $T_{s}$.  We have for $v, w \in \V_{ \lambda}$
$$B(v, w)= B(Tv, Tw)= B(T_sv, T_sw)= \lambda^2 B(v, w).$$
So if $B(v, w)  \not= 0$, then $\lambda = \pm 1$.
 Or to put it another way, if $\lambda \not= \pm 1$
then $B|_{\V_{\lambda}} = 0.$

 Also for $v \in \V_{ \lambda}$ and $ w \in \V_{ \mu}$ we have
$$B(v, w)= B(Tv, Tw)= B(T_sv, T_sw) = \lambda \mu B(v, w).$$

 So unless $\lambda \mu =1$ we have $\V_{\lambda}$ and
$\V_{\mu}$ are orthogonal with respect to $B$.
 Let $\oplus$ denote the orthogonal direct sum,
and $+$ the usual direct sum of subspaces. We have
\begin{equation}\label{decom}
\V = \V_{ 1} \oplus \V_{ -1} \bigoplus
\oplus_{\lambda \not= \pm 1}(\V_{\lambda} +
\V_{\lambda^{-1}}).
\end{equation}

 Moreover $B$ is non-degenerate on each  component of the
above orthogonal direct sum. That is,  $B$ induces a non-degenerate pairing   $\beta_{\lambda}: \V_{ \lambda} \times \V_{ \lambda^{-1}} \to \F$. In particular,
$dim \; \V_{\lambda} = dim \; \V_{\lambda^{-1}}$, and $B|_{\V_{\lambda}}=0=B|_{\V_{\lambda^{-1}}}$. A non-degenerate subspace of the form $$(\V_{\lambda} + \V_{\lambda^{-1}}, \;B|_{\V_{\lambda}}=0=B|_{\V_{\lambda^{-1}}})$$ is called a \emph{standard subspace}.  It follows that if $\lambda\neq \pm 1$ is an
eigenvalue of $T$, then $\lambda^{-1}$ is also an eigenvalue with the same multiplicity. Thus if $\chi_T(x)$ is the characteristic polynomial of $T$, then we have
$$\chi_T(x)=(x-1)^l (x+1)^m \chi_{oT}(x),$$
where $l, m \geq 0$ and $\chi_{oT}(x)$ is \emph{self-dual}, i.e. if $\lambda$ in $\F$ is a root, then $\lambda^{-1}$ is also a root and with the same multiplicity as $\lambda$.

The decomposition \eqnref{decom} is called the \emph{primary decomposition} of $(\V, B)$ with respect to $T$, and each non-degenerate $T$-invariant summand in the decomposition is called a \emph{primary component} of $\V$ with respect to $T$. It is clear that the conjugacy class of $T$ is determined by the conjugacy class of the restriction of $T$ on each of the primary components.

\subsection{Basic representation theory}\label{rp}
Suppose we are given two bilinear forms $B_1$, $B_2$ on two vector spaces $\U$ and $\W$. Then we can construct a bilinear form $B_1 \otimes B_2$ on $\U \otimes \W$ which is
$$(B_1 \otimes B_2)(u_1 \otimes w_1, u_2 \otimes w_2)=B_1(u_1, u_2) B_2(w_1, w_2).$$
When a vector space $\U$ is given with a non-degenerate bilinear form $B$, then we can construct a non-degenerate bilinear form $B^{{\otimes}d}$ on the $m$-th tensor product $\otimes^m \U$ using the above procedure. The form $B^{{\otimes}m}$ induces a non-degenerate bilinear form on
the $m$-th symmetric product $Sym^m(\U)$  of $\U$.

\medskip Recall that, a group representation $\pi$ on a vector space $\V$ is called \emph{irreducible} (or \emph{simple}), if it has no proper invariant subspace, i.e. the only  $\pi$-invariant subspaces are $0$ and $\V$.  It is a basic result in representation theory that
the  finite-dimensional irreducible representations of $\S L(2, \F)$ are given by symmetric products of $\F^2$ cf. Bourbaki \cite[Chapter-VIII, section-3,4]{bour}. There is a canonical symplectic form $\B_o$ on $\F^2$:
$$\hbox{for }v, w \hbox{ in } \F^2, \;\B_o(v,w)=\hbox{ the determinant of the matrix } \begin{pmatrix}v & w\end{pmatrix},$$
here we have considered the elements of $\F^2$ as column vectors.
Clearly $I(\F^2, \B_o)=\S L(2, \F)$. We identify $\F^2$ with its dual $\F^{2^{\ast}}$. Then a
a basis of $\F^2$ is given by two variables $x$, $y$, where $x$, $y$ represent the dual basis of $\F^2$. Thus for elements $u=ax + by$, $v=cx +dy$ in $\F^2$, we have $\B_o(u, v)=det \ \begin{pmatrix} a & b \\ c & d \end{pmatrix}=ad-bc$.
Then dimension of $Sym^m(\F^2)$ is $m+1$, and a basis of $Sym^m(\F^2)$ is given by
$$\{x^m, x^{m-1} y, x^{m-2}y^2,...,x^k y^{m-k}, ..., x^2 y^{m-2}, xy^{m-1}, y^m \}.$$
The symplectic form $\B_o$ induces non-degenerate $\S L(2, \F)$-invariant bilinear form $\B_m$ on $Sym^m(\F^2)$. Now note that for $u=ax+by$, $v=cx+dy$ in $\F^2$,
\begin{equation}
\B_m(\otimes^m (ax+by), \otimes^m (cx+dy))=\B_o(u, v)^m = (ad-bc)^m. \end{equation}
Expanding both sides and comparing co-efficients of the monomials $x^i y^{m-j}$ it follows that
$\B_m(x^i y^{m-i}, x^j y^{m-j})\neq 0$ if and only if $i + j =m$. Further we have
$$\B_m(x^i y^{m-i}, x^{m-i}y^i)=(-1)^i \ \frac{i! (m-i)!}{m!}.$$
This shows that $\B_m$ is symmetric, resp. symplectic if and only if $m$ is even, resp. odd.

Identifying $\V$ with $Sym^n(\F^2)$ we see that for dimension of $\V$ odd, resp. even, there is a canonical $\S L(2, \F)$-invariant symmetric, resp. symplectic bilinear form on $\V$. On a $k+1$ dimensional irreducible $\S L(2, \F)$-representation, an $\S L(2, \F)$-invariant non-degenerate bilinear form  is unique up to a constant multiple, and hence it must be $c \B_k$ for some scalar $c$.

\subsection{The Jacobson-Morozov Lemma} The Jacobson-Morozov lemma was first stated by Morozov \cite{morozov},  but his proof was incomplete.  Jacobson \cite{j2} streamlined and completed the proof. The statement of Jacobson-Morozov was for nilpotent elements in complex semisimple Lie algebras. Later it was extended by Bruhat \cite{br} to semisimple Lie algebras over fields of large characteristics. There are many versions of the Jacobson-Morozov lemma. In this exposition it is enough for us to note the following group theoretic version. We state it, as it is, in Kim-Shahidi \cite[p-405]{ks}.

\begin{theorem} (The Jacobson-Morozov Lemma)
 Suppose $u$ is a unipotent element in a semisimple algebraic group $G$. Then there exists a homomorphism $\phi:\S L_2 (\F) \to G$  such that $\phi \bigg( \begin{pmatrix} 1 & 1 \\ 0 & 1 \end{pmatrix}\bigg)=u$.
\end{theorem}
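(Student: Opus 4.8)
The plan is to deduce this group-theoretic statement from the Lie-algebra form of the lemma together with an integration step, so that the real work takes place inside $\mathfrak g=\mathrm{Lie}(G)$. First I would pass from $u$ to the Lie algebra: since $\mathrm{char}\,\F$ is $0$ or sufficiently large, the truncated exponential and logarithm are mutually inverse, $G$-equivariant maps between the nilpotent cone $\mathcal N\subset\mathfrak g$ and the unipotent variety of $G$, so I may write $X=\log u$ with $X$ nilpotent. Writing $e_0=\begin{pmatrix}0&1\\0&0\end{pmatrix}$, the generator in the statement is $\exp e_0$, so it suffices to produce a Lie-algebra homomorphism $d\phi:\mathfrak{sl}(2,\F)\to\mathfrak g$ with $d\phi(e_0)=X$ and then integrate it.

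The Lie-algebra Jacobson--Morozov lemma asks for $H,Y\in\mathfrak g$ making $(X,H,Y)$ an $\mathfrak{sl}(2,\F)$-triple, i.e.\ $[H,X]=2X$, $[H,Y]=-2Y$, $[X,Y]=H$. I would split this into the completion step (comparatively routine) and the existence of the neutral element (the crux). For the completion step, suppose a neutral $H$ with $[H,X]=2X$ has already been found, lying in $\mathrm{im}(\mathrm{ad}\,X)$. Then there is $Y_0$ with $[X,Y_0]=H$, and I would correct $Y_0$ by an element of the centralizer $\mathfrak z_{\mathfrak g}(X)$, using the $\mathrm{ad}\,X$-module structure and the invertibility of small integers in $\F$, to obtain $Y$ with $[H,Y]=-2Y$; the representation theory of $\mathfrak{sl}(2,\F)$ recalled in \secref{rp} controls exactly these chains.

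The main obstacle is producing the neutral element $H$, and this is precisely where semisimplicity of $G$ is used. The Killing form $\kappa$ is non-degenerate and invariant, whence $\mathrm{im}(\mathrm{ad}\,X)=\mathfrak z_{\mathfrak g}(X)^{\perp}$. The key point is $X\in\mathrm{im}(\mathrm{ad}\,X)$, equivalently $\kappa(X,z)=0$ for all $z\in\mathfrak z_{\mathfrak g}(X)$; this holds because $\mathrm{ad}\,X$ is nilpotent and commutes with $\mathrm{ad}\,z$, so $\mathrm{ad}\,X\,\mathrm{ad}\,z$ is nilpotent of trace zero. Since $X\in\mathrm{im}(\mathrm{ad}\,X)$ and $2$ is invertible, some $H$ satisfies $[H,X]=2X$. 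The delicate part is to arrange in addition that $H\in\mathrm{im}(\mathrm{ad}\,X)$ and that $H$ may be taken semisimple; this requires the inductive/structural refinement of Bourbaki (or reductivity of $\mathfrak z_{\mathfrak g}(X)$ together with a fixed-point argument), and it is the step most sensitive to the characteristic hypothesis, so I expect the bulk of the effort to go here.

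Finally I would integrate the subalgebra $\mathfrak s=\langle X,H,Y\rangle\cong\mathfrak{sl}(2,\F)$ to a connected subgroup $S\subseteq G$ locally isomorphic to $\S L_2(\F)$, exactly as described in the introduction: the root subgroups $t\mapsto\exp(tX)$ and $t\mapsto\exp(tY)$, matched with the standard generators of $\S L_2(\F)$, yield a homomorphism $\phi:\S L_2(\F)\to G$ whose differential sends $e_0$ to $X$ (if the construction only gives a map out of $P\S L_2(\F)$, I precompose with the quotient $\S L_2(\F)\to P\S L_2(\F)$, using that $\S L_2(\F)$ is simply connected). Then
$$\phi\!\begin{pmatrix}1&1\\0&1\end{pmatrix}=\phi(\exp e_0)=\exp\big(d\phi(e_0)\big)=\exp(X)=u,$$
which is the assertion of the lemma.
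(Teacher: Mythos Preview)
The paper does not prove this theorem at all: it is stated as a black box, quoted verbatim from Kim--Shahidi \cite{ks}, with historical attributions to Morozov \cite{morozov}, Jacobson \cite{j2}, and Bruhat \cite{br} for the extension to large characteristic. There is therefore no proof in the paper to compare your proposal against.

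That said, your outline is the standard route and is sound as a sketch: pass from $u$ to $X=\log u$ via the truncated exponential/logarithm (valid under the paper's characteristic hypothesis), build an $\mathfrak{sl}(2,\F)$-triple $(X,H,Y)$ in $\mathfrak g$, and integrate. Your identification of the crux is correct: the trace argument gives $X\in\mathrm{im}(\mathrm{ad}\,X)$, hence some $H$ with $[H,X]=2X$, and the real work is adjusting $H$ within $\mathfrak z_{\mathfrak g}(X)$ so that $H\in\mathrm{im}(\mathrm{ad}\,X)$ as well, after which the completion to $Y$ is routine $\mathfrak{sl}(2)$-representation theory. One minor remark: you do not actually need $H$ to be a semisimple element of $\mathfrak g$ for the statement at hand; once the triple exists, $\mathrm{ad}\,H$ acts semisimply on $\mathfrak g$ automatically (as $\mathfrak g$ is then an $\mathfrak{sl}(2,\F)$-module), and that is all the integration step uses. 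Your handling of the $\S L_2$ versus $P\S L_2$ ambiguity by precomposing with the quotient is exactly what the paper's introduction alludes to when it says $T$ lies in a subgroup ``locally isomorphic to $\S L(2,\F)$''.
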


\subsection{Restriction of the form on an indecomposable subspace}\label{unind}
\begin{lemma}\label{uind}
Let $B$ is symmetric, resp. symplectic. Let $T$ be a unipotent isometry. Let $\W$ be an indecomposable subspace with respect to $T$.

(i) Then the restricted form $B|_{\W}$ is either zero, or non-degenerate.

(ii) The form $B|_{\W}$ is non-degenerate if and only if the dimension of $\W$ is odd, resp. even.
\end{lemma}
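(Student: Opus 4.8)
The plan is to use the Jacobson--Morozov lemma to transport the problem into the representation theory of $\S L(2, \F)$, and then to read off both assertions from Schur's lemma together with the explicit description of the invariant form $\B_k$ obtained in Subsection \ref{rp}.

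First I would apply the Jacobson--Morozov lemma to the unipotent isometry $T$: since $I(\V, B)$ is semisimple, there is a homomorphism $\phi : \S L(2, \F) \to I(\V, B)$ with $\phi \bigg(\begin{pmatrix} 1 & 1 \\ 0 & 1\end{pmatrix}\bigg) = T$. This makes $\V$ a finite-dimensional $\S L(2, \F)$-module, and because $\phi$ lands in $I(\V, B)$ the form $B$ is $\S L(2, \F)$-invariant. The element $\begin{pmatrix} 1 & 1 \\ 0 & 1\end{pmatrix}$ acts as a single Jordan block on each irreducible $Sym^k(\F^2)$, so on every irreducible summand of $\V$ the operator $T$ is regular unipotent; hence the irreducible $\S L(2,\F)$-submodules of $\V$ are precisely $T$-indecomposable. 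I would therefore realise $\W$ as such an irreducible summand, $\W \cong Sym^k(\F^2)$ with $\dim \W = k+1$.

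For (i), I would observe that an $\S L(2,\F)$-invariant bilinear form on the irreducible module $\W$ is the same datum as an $\S L(2,\F)$-equivariant map $\W \to \W^{*}$, namely $v \mapsto B(v, \cdot)$. Since the irreducible representations of $\S L(2,\F)$ are self-dual, Schur's lemma forces this map to be either $0$ or an isomorphism, i.e. $B|_{\W}$ is either zero or non-degenerate. For (ii), suppose $B|_\W$ is non-degenerate. By the uniqueness of the invariant non-degenerate form on a $(k+1)$-dimensional irreducible (Subsection \ref{rp}), $B|_{\W} = c\,\B_k$ for some nonzero scalar $c$; and it was computed there that $\B_k$ is symmetric precisely when $k$ is even and symplectic precisely when $k$ is odd. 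Matching this parity against the hypothesis that $B$ is symmetric, resp. symplectic, yields that $B|_\W$ can be non-degenerate only when $\dim \W = k+1$ is odd, resp. even.

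The main obstacle I anticipate is the bookkeeping that links the abstract notion of a $T$-indecomposable subspace to a genuine irreducible $\S L(2,\F)$-submodule; this identification is exactly what makes Schur's lemma applicable and is the crux of reducing both parts to the single computation of $\B_k$. The remaining delicate point is the converse direction of (ii): to see that a summand of the correct parity actually carries a non-degenerate (rather than identically zero) restricted form, I expect to invoke the non-degeneracy of $B$ on the ambient primary component and the fact that the zero-form summands are forced to pair off into dual (hyperbolic) pairs.
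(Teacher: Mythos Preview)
Your proposal is correct and follows essentially the same route as the paper: embed $T$ in a copy of $\S L(2,\F)$ via Jacobson--Morozov, identify the $T$-indecomposable $\W$ with an irreducible $\S L(2,\F)$-module, and then read off both parts from the uniqueness (up to scalar) of the invariant form $\B_k$ on $Sym^k(\F^2)$. The only cosmetic difference is in part (i): the paper argues that the radical of $B|_{\W}$ is a $\pi$-submodule of the irreducible $\W$ and hence $0$ or $\W$, whereas you phrase the same fact via Schur's lemma applied to the map $\W \to \W^{\ast}$; the ``main obstacle'' you flag (that $\W$ be a genuine $\S L(2,\F)$-submodule, not merely $T$-invariant) is exactly the step the paper also leaves implicit.
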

\begin{proof}
By the Jacobson-Morozov lemma, $T$ is contained in a subgroup $\pi$ of $I(\V, B)$ such that $\pi$ is locally isomorphic to $\S L(2, \F)$.

(i) Let $rad(\W)$ denote the radical of $B|_{\W}$, i.e.
$$rad(\W)=\{w \in \W\;|\;B(w, x)=0 \hbox{ for all } x\in \W\}.$$
Then $rad(\W)$ is a $\pi$-invariant subspace. We claim that $\W$ is $\pi$-irreducible.
 For otherwise $\W$ can be expressed as a direct sum of $\pi$-invariant, $\pi$-irreducible subspaces. Since $T$ is in $\pi$, this gives a decomposition of $\W$ into a direct sum of $T$-invariant subspaces. This contradicts that $\W$ is $T$-indecomposable. Hence $\W$ must be irreducible with respect to $\pi$.  Hence $rad(\W)$ is either $\W$, or $0$. This implies that $B|_{\W}$ is either $0$, or non-degenerate.

(ii) Let the dimension of $\W$ be $k+1$. Since on an irreducible $\S L(2, \F)$-representation, there is a unique, up to a constant multiple, non-degenerate $\S L(2, \F)$-invariant bilinear form, the induced $\pi$-invariant non-degenerate form on $\W$ must be $c \B_k$, for some scalar $c$. Hence dimension of $\W$ is odd, resp. even if and only if $B|_{\W}$ is non-degenerate symmetric, resp. symplectic form.

This completes the proof of the lemma.
\end{proof}

\section{Proof of \thmref{ccac}}\label{pf}
Clearly if two isometries are conjugate, they have the same elementary divisors. In the following we prove the converse.

Let $T: \V \to \V$ be an isometry. Let $\V_{\lambda}$ denote the generalized eigenspace of $T$ with eigenvalue $\lambda$. Since the elementary divisors determine the decomposition \eqnref{decom}, it is sufficient to prove the theorem on each of the primary components. So without loss of generality, we may assume that $\V$ is a primary component.

{\it Case-1}.  Let $\V=\V_{\lambda} + \V_{\lambda^{-1}}$,
$B|_{\V_{\lambda}}=0=B|_{\V_{\lambda^{-1}}}$.

Since $B$ is non-degenerate, we can choose a basis $\{e_1,....,e_m, f_1,...,f_m\}$ such that for all $i$, $e_i \in \V_{\lambda}$, $f_i \in \V_{\lambda^{-1}}$, and
$$B(e_i, e_i)=0=B(f_i, f_i),\; B(e_i, f_j)=\delta_{ij} \hbox{ or }-\delta_{ij}.$$
For each $w^{\ast} \in \V_{\lambda^{-1}}$, define the linear map
$w^{\ast}: v \to B(v, w)$. These maps enable us to identify $\V_{\lambda^{-1}}$ with the dual of $\V_{\lambda}$. Thus $T=T_L + T_L^{\ast}$, where $T_L$, the restriction of $T$ to $\V_{\lambda}$, is an element of $\G(\V_{\lambda})$.

Now suppose $T: \V_{\lambda} \to \V_{\lambda}$ is an invertible linear map and let $T^{\ast}: \V_{\lambda^{-1}} \to \V_{\lambda^{-1}}$ be its dual. Define the linear map $h_T: \V \to \V$ as follows
$$h_T(v)=\left \{ \begin{array}{ll}
T^{-1}(v) & \hbox{ if } v \in \V_{\lambda}\\
{T^{\ast}}(v) & \hbox{if } v \in \V_{\lambda^{-1}}
\end{array} \right.$$
Now observe that for $u, w \in \V_{\lambda}$,
$$B(h_Tu,h_Tw^{\ast})=h_Tw^{\ast}(h_Tu)=(T^{\ast}w^{\ast})(T^{-1}u)=w^{\ast}(TT^{-1}u)=w^{\ast}(u)=B(u,w^{\ast}).$$
This shows that $h_T$ is an isometry.

Thus in this case the conjugacy classes can be parametrized by the usual theory of linear maps. Hence the conjugacy classes are classified by the elementary divisors of an isometry.

{\it Case 2. } Suppose $T$ is unipotent.  Without loss of generality, assume $\V=\V_1$. Using \lemref{uind}, it follows that  $\V$ has a $T$-invariant orthogonal decomposition
\begin{equation}\label{v1}
\V_1=\oplus_{i=1}^{k_1} \U_i \bigoplus \oplus_{j=1}^{k_2} (\W_j + \W_j'),
\end{equation}
where for $i=1,2,...,k_1$, $\U_i$ is indecomposable with respect to $T$, and for $j=1,2,..,k_2$, $\W_j + \W_j'$ is a standard subspace. Thus the conjugacy class is determined by the restriction of $T$ on each of the components in the above orthogonal sum. So without loss of generality we may further assume that $\V$ is either indecomposable with respect to $T$, or is a standard space. If $\V$ is a standard space, there is nothing to prove, cf. case-1 above. So we may assume without loss of generality that $\V$ is indecomposable with respect to $T$.

Let $(\V', B')$ be another non-degenerate space such that dim
$\V=$ dim $\V'$. The form $B'$ is symmetric, resp. symplectic
according as $B$ is symmetric, resp. symplectic.  Let  $S: \V' \to
\V'$ be an isometry such that the elementary divisors of $S$ and
$T$ are the same. Further suppose $\V'$ is indecomposable with
respect to $S$. Clearly there is a linear isomorphism $f: \V \to
\V'$ such that $S=fTf^{-1}$. Let $B_1$ be the induced form on
$\V'$ by $f$. Since $S$ is unipotent, by the Jacobson-Morozov
lemma, there is an embedding of $S$ into a subgroup $\pi$ of
$I(\V', B')$, where $\pi$ is locally isomorphic to $\S L(2, \F)$.
Since $\V'$ is $S$-indecomposable, it must be irreducible with
respect to $\pi$. Thus there is a unique, up to a constant
multiple, non-degenerate $\pi$-invariant form. Hence $B_1=cB'$ for
some $c$ in $\F$. The transformation $C={\sqrt c}f: \V \to \V'$ is
an isometry, and $S=CTC^{-1}$.

Thus if two unipotent isometries $S$ and $T$ in $I(\V, B)$ have the same elementary divisors, then the decompositions \eqnref{v1} corresponding to the isometries are isomorphic. From the above it follows that the restriction of $S$ and $T$ on each of the isomorphic non-degenerate summands are conjugate, and hence $S$ is conjugate to $T$ in $I(\V, B)$. Thus the elementary divisors determine the conjugacy classes of unipotent isometries.

{\it Case-3. } $m_T(x)=(x+1)^d$.  Note that $-T$ is also an isometry of $\V$. Also if $m_T(x)=(x+1)^d$, then $m_{-T}(x)=(x-1)^d$ and vice-versa. Further  two isometries $S$ and $T$ are conjugate to each-other if and only if $-S$ and $-T$ are conjugates. Thus this case is reduced to the unipotent case, and the classification of conjugacy class of $T$ is similar to that of $-T$.

This completes the proof.

\subsection*{Acknowledgement}
{\it Already around 2006, Ravi Kulkarni had mentioned to me the
relevance of the Jaconson Morosov lemma   for the conjugacy
problem in the orthogonal and symplectic case.  I thank Ravi
Kulkarni for many  discussions and sharing his insights, cf
\cite{gk}. Thanks also to Dipendra Prasad for his discussions of
the Jacobson-Morozov lemma in large characteristics. Finally it is
a great pleasure to thank the referee for carefully reading this
paper and for comments.}

\end{document}